\newtheorem{theorem}{Theorem}[section]
\newtheorem{theorem*}{Theorem}
\newtheorem{proposition}[theorem]{Proposition}
\newtheorem{corollary}[theorem]{Corollary}
\theoremstyle{definition}
\renewcommand{\d}{\operatorname{d}} 
\renewcommand{\H}{\operatorname{H}}
\newcommand{\ZZ}{\mathbb{Z}} 
\newcommand{\RR}{\mathbb{R}} 
\newcommand{\CC}{\mathbb{C}}
\newcommand{\TT}{\mathbb{T}}
\newcommand{\GG}{\mathbb{G}}
\newcommand{\Cok}{\operatorname{Cok}}
\newcommand{\Ker}{\operatorname{Ker}}
\newcommand{\Hom}{\operatorname{Hom}}
\newcommand{\B}{\operatorname{B}\!}
\newcommand{\Aut}{\operatorname{Aut}}
\newcommand{\BAut}{\B\Aut}
\newcommand{\Diff}{\operatorname{Diff}}
\newcommand{\BDiff}{\B\Diff}
\newcommand{\Spiff}{\operatorname{Spiff}}
\newcommand{\BSpiff}{\B\Spiff}
\newcommand{\Spiffc}{\operatorname{Spiff}^c}
\newcommand{\BSpiffc}{\B\Spiffc}
\newcommand{\Spinc}{\operatorname{Spin}^c}
\title{\bf Characteristic cohomotopy classes \hbox{ for families
    of 4-manifolds}}
\author{Markus Szymik}
\date{August 2008}
\begin{document}

\maketitle

\begin{abstract}
\noindent
  Families of smooth closed oriented 4-manifolds with a
  complex spin structure are studied by means of a family
  version of the Bauer--Furuta invariants in the context of
  parametrised stable homotopy theory, leading to a
  definition of characteristic cohomotopy classes on Thom
  spectra associated to the classifying spaces of their
  complex spin diffeomorphism groups. This is illustrated
  with mapping tori of such diffeomorphisms and related to
  the equivariant invariants.
\end{abstract}

\thispagestyle{empty}

%%%%%%%%%%%%%%%%%%%%%%%%%%%%%%%%%%%%%%%%%%%%%%%%%%%%%%%%%%%%%%%%%%%%%%%%%%%%%

\section*{Introduction}

The suggestion to extend the gauge theoretical invariants of
smooth 4-manifolds to families and diffeomorphism groups of
such has been around for quite a while,
see~\cite{Donaldson:SW}, \cite{Friedman},
or~\cite{Bauer:Survey}, for example. And there have already
been some efforts in this direction, see
\cite{Ruberman:Obstruction}, \cite{Ruberman:Polynomial},
\cite{Ruberman:PSC}, \cite{LiLiu}, and \cite{Nakamura},
mostly in the context of Seiberg-Witten invariants. This
paper addresses the issue in the context of the Bauer--Furuta
invariants~\cite{BauerFuruta}, bringing affairs to a state
which is conceptionally pleasing and accessible for
calculations.

All 4-manifolds considered here will be closed and
oriented. For simplicity it will also be assumed that the
first Betti number vanishes. It is shown that there are
natural characteristic cohomotopy classes for families of
complex spin 4-manifolds, generalising the invariants of
Bauer and Furuta~\cite{BauerFuruta}.  As the latter are
stable cohomotopy classes, the natural context for family
invariants seems to be fibrewise stable homotopy theory. See
\cite{CrabbJames} for an elementary approach and
\cite{MaySigurdsson} for a more technical treatment.
Section~\ref{sec:familyinvariants} explains how the monopole
map of a family may be used to construct stable homotopy
classes over the base of the family,
following~\cite{BauerFuruta}, see
Theorem~\ref{thm:existence}.

Section~\ref{sec:functoriality} discusses an important
property of the family invariants: functoriality under
pullbacks, see~\ref{thm:pullbacks}. Recall that
characteristic classes for vector bundles can be described
in two ways. Either by giving a universal class in the
cohomology of the classifying space. Or by assigning to
every vector bundle a class in the cohomology of the base
such that these classes are compatible under
pullbacks. These two descriptions are equivalent up
to~$\lim^1$-terms. The former seems to be more suitable in
the statements of the results, while the latter is used in
the proofs. This is how we will proceed in this
paper. Functoriality under pullbacks implies that for every
complex spin 4-manifold~$(X,\sigma_X)$ there is a universal
characteristic class over classifying space of the complex
spin diffeomorphism group~$\BSpiff^c(X,\sigma_X)$,
see~\ref{thm:universal}.
Section~\ref{sec:classifyingspaces} contains a detailed
description of what these spaces classify and how they
relate to the ordinary diffeomorphism groups. Functoriality
under pullbacks also determines the invariants of product
families, see Corollary~\ref{cor:products}.

The family invariants are non-trivial for trivial reason,
much the same as with Thom classes for vector bundles: the
restriction of a family to a point in the base yields the
Bauer--Furuta invariant of the fibre over that point,
see~\ref{cor:fibres}. Thus, the classes constructed here
behave rather like Thom classes than like Euler classes. In
fact, there is an interpretation of the family invariants
using the ordinary stable cohomotopy of Thom spectra for
families of Fredholm operators. See Theorem~\ref{thm:thom}
in Section~\ref{sec:thomspectra}.

Section~\ref{sec:K3} applies this to gain information about
families of K3 surfaces. The example of the universal Kummer
family also shows that the family invariant need not be
determined by the ordinary invariant of the
fibre. Section~\ref{sec:mapping_tori} illustrates how the
characteristic cohomotopy classes for mapping tori may be
used to define an invariant of isotopy classes of
diffeomorphisms of 4-manifolds. The final
Section~\ref{sec:diss} comments on the relationship of the
family invariants with the equivariant invariants
of~\cite{Szymik:Diss}.

%%%%%%%%%%%%%%%%%%%%%%%%%%%%%%%%%%%%%%%%%%%%%%%%%%%%%%%%%%%%%%%%%%%%%%%%%%%%%

\section{Classifying spaces for spin diffeomorphism\texorpdfstring{\\}{} 
  groups}\label{sec:classifyingspaces}

If~$X$ is a closed oriented manifold, the
notation~$\Diff(X)$ will refer to the group of
diffeomorphisms of~$X$ which preserve the orientation. The
space~$\BDiff(X)$ classifies smooth families~$p:Y\rightarrow
B$ with an orientation on the relative tangent bundle such
that the fibres are diffeomorphic to~$X$ as oriented
manifolds. One may just as well assume that there is a
metric on the relative tangent bundle since there is a
contractible choice of these. One gets something different
if one requires the fibres to be isometric to~$X$ with a
given metric; this defines the classifying space of the
isometry group of that metric.

If~$\sigma_X$ is a (real) spin structure on~$X$, instead of
looking at the group of diffeomorphisms~$f$ which preserve
the spin structure in the sense that~$f^*\sigma_X$ is
isomorphic to~$\sigma_X$, one may consider the
group~$\Spiff(X,\sigma_X)$ of pairs~$(f,u)$, where~$f$ is an
orientation preserving diffeomorphism of~$X$
and~\hbox{$u:f^*\sigma_X\rightarrow\sigma_X$} is an
isomorphism of spin structures. The space~$\BSpiff(X)$
classifies smooth families~$p:Y\rightarrow B$ with a spin
structure on the relative tangent bundle such that the
fibres are diffeomorphic to~$X$ as spin manifolds. By
definition, there is an exact sequence
\begin{displaymath}
  1\longrightarrow
  \Aut(\sigma_X)\longrightarrow
  \Spiff(X,\sigma_X)\longrightarrow
  \Diff(X).
\end{displaymath}
The 2-torus shows that the rightmost arrow need not be
surjective. But since there are only finitely many spin
structures on~$X$, the classifying space of the image is a
finite covering of~$\BDiff(X)$.  The group~$\Aut(\sigma_X)$
is~$\ZZ/2$.
%% \begin{proof}
%% \end{proof}
The exact sequence leads to a fibration between the
classifying spaces. The~2-sphere shows that this need not
split. However, this fibration shows that the
map~$\BSpiff(X,\sigma_X)\rightarrow\BDiff(X)$ induces
isomorphisms between the higher homotopy groups~$\pi_n$ for
$n\geqslant3$.

A group~$\Spiffc(X,\sigma_X)$ can be defined similarly,
using a complex spin structure~$\sigma_X$ on~$X$. Note that
in the description of the families classified
by~$\BSpiffc(X,\sigma_X)$ one may require a unitary
connection on the determinant line bundle since there is a
contractible choice of these. However we do not require that
the restrictions to the fibres give a fixed connection
for~$\sigma_X$.  There is an exact sequence and a fibration
as above. This time the group~$\Aut(\sigma_X)$ is the
(gauge) group of maps from~$X$ to~$\TT$.  Since~$b^1(X)$ is
assumed to vanish, the space~$\BAut(\sigma_X)$ is equivalent
to a copy of~$\B\TT$. Note that this differs from the
description from the previous paragraph in case the complex
spin structure~$\sigma_X$ comes from real spin
structure. The fibration
\begin{displaymath}
  \B\TT
  \longrightarrow
  \BSpiffc(X,\sigma_X)\longrightarrow
  \BDiff(X)
\end{displaymath}
shows that the
map~\hbox{$\BSpiffc(X,\sigma_X)\rightarrow\BDiff(X)$}
induces isomorphisms between the higher homotopy
groups~$\pi_n$ for~$n\geqslant4$.

%%%%%%%%%%%%%%%%%%%%%%%%%%%%%%%%%%%%%%%%%%%%%%%%%%%%%%%%%%%%%%%%%%%%%%%%%%%%%

\section{Bauer--Furuta invariants for families}\label{sec:familyinvariants}

This section contains a framework to extend the Bauer--Furuta
invariants~\cite{BauerFuruta} to families of
smooth~4-manifolds. First an appropriate notion of a family
of~4-manifolds with complex spin structure and the
corresponding monopole map are explained. Then the
invariants are defined.

\subsection{The ingredients}\label{subsec:ingredients}

A smooth family~$Y|B$ of 4-manifolds is given by a smooth
submersion~\hbox{$p:Y\rightarrow B$} between smooth manifolds~$Y$
and~$B$, such that the relative dimension of~$p$
is~\hbox{$\dim(Y)-\dim(B)=4$}. For any point~$b$ in~$B$, the
fibre of~$p$ over~$b$ is~a~4-manifold~$Y(b)$. All fibres are
diffeo\-morphic if the base~$B$ is connected, which will be
assumed throughout this section. The differential
\begin{equation}\label{differential_of_submersion}
  T_Y\longrightarrow p^*T_B
\end{equation}
is surjective. Its kernel~$T_{Y|B}$ is a 4-dimensional vector bundle
over~$Y$, a subbundle of~$T_Y$.
This is the relative tangent bundle, or vertical tangent
bundle, or the tangent bundle along the fibres. The fibre in
a point~$y$ is the tangent bundle in~$y$ of the
fibre~$Y(p(y))$ which passes through~$z$.  Similarly, the
relative cotangent bundle~$T_{Y|B}^{\scriptscriptstyle\vee}$
is the cokernel of the injection dual
to~\eqref{differential_of_submersion}. We will only consider
oriented families, i.e. families such that the
bundles~$T_{Y|B}$ and~$T_{Y|B}^{\scriptscriptstyle\vee}$ are
oriented.  The orientations induce orientations on every
fibre.  (The Klein bottle shows that families are not
necessarily orientable just because the fibres and the base
are.)
As explained in Section~\ref{sec:classifyingspaces}, it can
and will be assumed that there is a metric on the
bundle~$T_{Y|B}$.

In order to define the monopole map for a smooth
family~$Y|B$, a family of complex spin structures is
required, i.e. a complex spin structure~$\sigma_{Y|B}$ on
the relative tangent bundle. Note that this is a
4-dimensional vector bundle over~$Y$, so that a complex spin
structure means that the structure group of that bundle is
reduced to the group~$\Spinc(4)$. The spinor bundles will
usually be denoted by~$W^{\,\pm}(\sigma_{Y|B})$. As
explained in Section~\ref{sec:classifyingspaces}, it can and
will be assumed that a unitary connection~$A$~-- referred to
as the {\it base connection}~-- on the determinant line
bundle~$L(\sigma_{Y|B})$ is fixed.

One may ask the question whether or not one may always find
a complex spin structure on the relative tangent bundle
which restricts to the given one on the fibres. The homotopy
theory from Section~\ref{sec:classifyingspaces} displays a
single obstruction living in~$\H^3(B;\ZZ)$.

\subsection{The monopole map} 

Given a family~$p:Y\rightarrow B$ as above, let
\begin{displaymath}
  \Lambda^k_{Y|B}=
  \Lambda^kT_{Y|B}^{\scriptscriptstyle\vee}\otimes\underline{L\TT}_Y
\end{displaymath}
be the bundle over~$Y$ whose sections are the
alternating~$k$-linear forms on the relative tangent bundle
with values in the Lie algebra~$L\TT$ of~$\TT$. (Here and in
the following, the notation~$\underline{L\TT}_Y$ is used for
the trivial bundle with fibre~$L\TT$ over~$Y$.) The
bundle~$\Lambda^2_{Y|B}$ decomposes as
$\Lambda^+_{Y|B}\oplus\Lambda^-_{Y|B}$ into the self-dual
and the anti-self-dual part. Since the
bundle~$\Lambda^0_{Y|B}$ is the trivial line bundle, the
sections thereof are just the~$L\TT$-valued functions
on~$Y$. Thus, the direct image~$p_*\Lambda^0_{Y|B}$ on~$B$
contains a trivial line bundle isomorphic
to~$\underline{L\TT}_B$ which is generated by the constants,
i.e. by the functions which live on~$B$.

If, in addition, a complex spin structure~$\sigma_{Y|B}$
on~$Y|B$ is given, consider the
(infinite-dimensional)~$\TT$-vector bundles
\begin{eqnarray*}
  \mathcal U &=& p_*W_{Y|B}^+(\sigma_{Y|B})\oplus
  p_*\Lambda^1_{Y|B}\oplus\underline{L\TT}_B\\ \mathcal V
  &=& p_*W_{Y|B}^-(\sigma_{Y|B})\oplus
  p_*\Lambda^+_{Y|B}\oplus p_*\Lambda^0_{Y|B}
\end{eqnarray*}
over the trivial~$\TT$-space~$B$. The \textit{monopole map}
$\mu(Y|B,\sigma_{Y|B})$, defined as
\begin{displaymath}
  (\phi,a,f)\longmapsto(D_{A+a}(\phi),F^+_{A+a}-\phi^2,\d^*(a)-f),
\end{displaymath}
is a~$\TT$-equi\-variant map from~$\mathcal U$ to
$\mathcal V$ over~$B$.

\subsection{The linearisation and its index bundles}  

The linearisation (at the origin) of the monopole map above is given as
the~$\TT$-linear map~$\lambda(Y|B,\sigma_{Y|B})$
from~$\mathcal U$ to~$\mathcal V$ over~$B$ defined by
\begin{equation}\label{linearisation}
  (\phi,a,f)\longmapsto(D_A(\phi),\d^+(a),\d^*(a)-f).
\end{equation}
It is fibrewise Fredholm over~$B$, so that it has an
index. That index is a virtual vector bundle over~$B$,
i.e. an element in the~$K$-theory of~$B$, in~$KO_\TT(B)$ to
be precise.~(Since~$B$ is a trivial~$\TT$-space, the
group~$KO_\TT(B)$ splits into a copy of~$KO(B)$ and copies
of~$KU(B)$, one for each positive integer.) As one sees
from~\eqref{linearisation}, the index of the linearisation
is the sum of the index of the family of Dirac operators and
the index of the family of fundamental elliptic complexes.

The family of complex linear Dirac operators defines a class
in the group~$KU(B)$, which will be identified with its
image in~$KO_\TT(B)$.  It seems harmless to call this class
the {\it Dirac bundle} for short.  Its rank
is~\hbox{$(c_1^2-s)/8$}, where~$s$ is the signature of the
fibres, and~$c_1$ refers to the first Chern class of the
determinant line bundle restricted to the fibres.

The other summand is the index of a family of a real
opera\-tors, so that the index is a class of rank~$b^+(X)$
in the subgroup~$KO(B)$ of~$KO_\TT(B)$. This class is in
fact the additive inverse of the {\it plus bundle}, namely
of the flat vector bundle which is the self-dual
subbundle~\hbox{$R^+p_*\underline\RR_Y$}
of~$R^2p_*\underline\RR_Y$.  (Here, the
symbol~$\underline\RR_Y$ denotes the sheaf of locally
constant functions with values in~$\RR$ with the discrete
topology.) See~\cite{Atiyah:Signature}
and~\cite{AtiyahSinger}.
If the base space~$B$ of the family is simply-connected,
this flat bundle will always be \hbox{trivial}.
 
\subsection{The invariants for families} 

Let~$B$ be compact base manifold, and
let~$(Y|B,\sigma_{Y|B})$ be a complex spin family over~$B$
as in \ref{subsec:ingredients}. The monopole
map~$\mu(Y|B,\sigma_{Y|B})$ is a continuous map between
Hilbert space bundles which is the sum of its linearisation
$\lambda=\lambda(Y|B,\sigma_{Y|B})$, which is fibrewise
Fredholm over~$B$, and another map~$\kappa$. The arguments
in~\cite{BauerFuruta} show that~$\kappa$ is a
fibre-preserving map which sends bounded disk bundles to
subspaces which are proper over~$B$. Furthermore pre-images
under~$\mu(Y|B,\sigma_{Y|B})$ of bounded disk bundles are
contained in bounded disk bundles.

Let us review the construction from~\cite{BauerFuruta} in
this context. One chooses~a finite-dimensional subbundle~$V$
inside~$\mathcal{V}$. It is required that~$V$ is
sufficiently large, so that it contains the subspaces
$\Cok(\lambda)$. Tautologically,~$\lambda$ maps the
subbundle~$\lambda^{-1}(V)$ of~$\mathcal{U}$ , which
contains~$\Ker(\lambda)$, into~$V$. Essentially by
assumption,~$\mu$ maps~$\lambda^{-1}(V)$ near to~$V$. In
particular, after possibly enlarging~$V$ further, one can
achieve that the sphere bundle~$S_B(\mathcal{V}-V)$ is
missed. Then the fibrewise one-point-compactification~$S_B^{\lambda^{-1}(V)}$ is mapped into
$S_B^{\mathcal{V}}\backslash S_B(\mathcal{V}-V)$, which can
be retracted to~$S_B^V$. This gives a map
from~$S_B^{\lambda^{-1}(V)}$ to~$S_B^V$. While this is
morally the map one wants, taken as it is, it does not
define~a class
in~$[S_B^{\Ker(\lambda)},S_B^{\Cok(\lambda)}]^\TT_B$ for the
universe~$\mathcal{V}$. However,~\hbox{$V=\Cok(\lambda)\oplus(V-\Cok(\lambda))$},
and the map~$\lambda$ induces an isomorphism of
$\lambda^{-1}(V)-\Ker(\lambda)$ with~$V-\Cok(\lambda)$.
Thus, there is~a unique dashed arrow such that the diagram
\begin{center}
  \mbox{
    \xymatrix@R=20pt{
      S_B^{\Ker(\lambda)\oplus(V-\Cok(\lambda))}\ar@{-->}[r]
      & S_B^{\Cok(\lambda) \oplus
        (V-\Cok(\lambda))}\\
      S_B^{\Ker(\lambda)\oplus
        (\lambda^{-1}(V)-\Ker(\lambda))}\ar[u]^-{\cong}  & \\
      S_B^{\lambda^{-1}(V)}\ar@{=}[u]\ar[r] & S_B^V\ar@{=}[uu] 
  }}
\end{center}
commutes. It represents an element in
$[S_B^{\Ker(\lambda)},S_B^{\Cok(\lambda)}]^\TT_B$ which does
not depend on~$V$.

\begin{theorem}\label{thm:existence}
  For every complex spin family~$(Y|B,\sigma_{Y|B})$
  over~$B$, the monopole map defines a class in the
  group~$[S^{\Ker(\lambda)}_B,S^{\Cok(\lambda)}_B]^\TT_B$.
\end{theorem}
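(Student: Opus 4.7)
The construction outlined immediately before the theorem already supplies the skeleton of the proof; what remains is to verify that a suitable finite-dimensional subbundle $V\subset\mathcal V$ exists, and that the resulting class in $[S_B^{\Ker(\lambda)}, S_B^{\Cok(\lambda)}]^\TT_B$ does not depend on the choice of $V$.

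First I would establish the existence of a finite-dimensional $\TT$-equivariant subbundle $V\subset\mathcal V$ whose fibres contain those of $\Cok(\lambda)$. Because $\lambda$ is fibrewise Fredholm over the compact base $B$, local complements to its image can be patched by a partition of unity to yield such a $V_0$. The properness properties of $\kappa$ recalled from \cite{BauerFuruta} -- bounded disk bundles map to subsets proper over $B$, and pre-images under $\mu$ of bounded disk bundles are bounded -- then allow one to enlarge $V_0$ to a $V$ with the additional property that, over a sufficiently large closed disk bundle in $\lambda^{-1}(V)$, the image of $\mu$ avoids the sphere bundle $S_B(\mathcal V-V)$. Compactness of $B$ is used here to choose the required radius uniformly in the base.

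With such a $V$ chosen, I would define the map $S_B^{\lambda^{-1}(V)}\to S_B^V$ by fibrewise one-point compactifying a large enough closed disk bundle, applying $\mu$, and postcomposing with the deformation retraction of $S_B^{\mathcal V}\setminus S_B(\mathcal V-V)$ onto $S_B^V$. The splitting $V=\Cok(\lambda)\oplus(V-\Cok(\lambda))$, together with the isomorphism induced by $\lambda$ from $\lambda^{-1}(V)-\Ker(\lambda)$ to $V-\Cok(\lambda)$, fills in the commutative diagram displayed above the theorem; the unique dashed arrow then represents a class in $[S_B^{\Ker(\lambda)\oplus(V-\Cok(\lambda))},S_B^{\Cok(\lambda)\oplus(V-\Cok(\lambda))}]^\TT_B$, and hence, after desuspension by $V-\Cok(\lambda)$ in the universe $\mathcal V$, the desired stable element.

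The last and most delicate step is showing independence of $V$. For a nested pair $V\subset V'$ one identifies the map built from $V'$ with the $(V'-V)$-fibrewise suspension of the map built from $V$, using naturality of the deformation retractions and the compatible splittings $\lambda^{-1}(V')=\lambda^{-1}(V)\oplus\lambda^{-1}(V'-V)$ that are available after enlarging the disk radius. For arbitrary $V_1,V_2$ one then compares both to $V_1+V_2$. The main technical obstacle throughout is parametrised in nature: all choices (disk radii, retractions, complementary subbundles) must be made continuously over $B$, and it is exactly the combination of compactness of $B$ with the properness conditions on $\kappa$ that lets the standard Bauer--Furuta argument carry over. Equivariance under $\TT$ is automatic, since $\lambda$, $\mu$ and all auxiliary constructions can be performed $\TT$-equivariantly on the trivial $\TT$-space $B$.
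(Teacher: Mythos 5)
Your proposal follows the same blueprint as the paper's own argument (which appears in the paragraph immediately preceding the theorem, adapting the Bauer--Furuta construction to the parametrised setting): choose a finite-dimensional $\TT$-subbundle $V\supset\Cok(\lambda)$, use the properness of $\kappa$ and the compactness of $B$ to enlarge $V$ so that the image of $\mu$ on a large disk bundle misses $S_B(\mathcal V-V)$, retract onto $S_B^V$, and then use the displayed diagram and the isomorphism $\lambda^{-1}(V)-\Ker(\lambda)\cong V-\Cok(\lambda)$ to desuspend to a class in $[S_B^{\Ker(\lambda)},S_B^{\Cok(\lambda)}]^{\TT}_B$, finally checking independence of $V$ by comparing nested choices. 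You have merely spelled out the existence of $V$ and the stabilisation argument in more detail than the paper does; there is no genuine divergence in method.
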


Standard arguments -- using the contractibility of the
choices involved -- show that this class neither depends on
the metric nor on the connection.

This is the Bauer--Furuta invariant of~$Y|B$ with respect to
$\sigma_{Y|B}$. As with any other invariant, the
computability of these family invariants depends on
structure theorems which describe how the invariants change
when the families are changed. The following section
contains such a structure theorem for the family invariants.

%%%%%%%%%%%%%%%%%%%%%%%%%%%%%%%%%%%%%%%%%%%%%%%%%%%%%%%%%%%%%%%%%%%%%%%%%%%%%

\section{Functoriality}\label{sec:functoriality}

In this section, a complex spin 4-manifold~$(X,\sigma_X)$ is
fixed, and complex spin families with fibre~$(X,\sigma_X)$
are considered for varying base~$B$.
Given such a family over~$B$, a morphism~$B'\rightarrow B$ induces a
pullback family over~$B'$. Pullback also induce a base change functor
from the stable homotopy category over~$B$ to the stable homotopy
category over~$B'$, see~\cite{CrabbJames}. Inspection of the
definitions immediately given the following structure result.

\parbox{\linewidth}{\begin{theorem}\label{thm:pullbacks}
  Given a family over~$B$ and a map~$B'\rightarrow B$, the invariant
  of the pullback family over~$B'$ is the image of the invariant of
  the family over~$B$ under the homomorphism
  \begin{displaymath}
    [S^{\Ker(\lambda)}_B,S^{\Cok(\lambda)}_B]^\TT_B
    \longrightarrow
    [S^{\Ker(\lambda')}_{B'},S^{\Cok(\lambda')}_{B'}]^\TT_{B'},
  \end{displaymath}
  induced by the base change functor.
\end{theorem}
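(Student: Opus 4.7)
The plan is to verify that each ingredient in the construction of Section~\ref{sec:familyinvariants} is natural under pullback along $f : B' \to B$, and then to invoke the compatibility of base change with the fibrewise stable-homotopy operations employed there. The verification is performed term by term through the construction that yields Theorem~\ref{thm:existence}.

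First I would set up the comparison. The pulled back family $(f^*Y|B', f^*\sigma_{Y|B})$ inherits its relative tangent bundle, spinor bundles, and base connection by pullback. Hence the associated Hilbert space bundles are $\mathcal{U}' = f^*\mathcal{U}$ and $\mathcal{V}' = f^*\mathcal{V}$, the monopole map is the base change $\mu' = f^*\mu$, and consequently the linearisation satisfies $\lambda' = f^*\lambda$. Since $\lambda$ is fibrewise Fredholm so is $\lambda'$, and the bundles used in Theorem~\ref{thm:existence} pull back, so that $\Ker(\lambda')$ and $\Cok(\lambda')$ identify with $f^*\Ker(\lambda)$ and $f^*\Cok(\lambda)$.

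Next I would check that the finite-dimensional choices in the construction are natural. If $V \subset \mathcal{V}$ satisfies the requirements from Section~\ref{sec:familyinvariants}, namely that it contains $\Cok(\lambda)$ and is large enough that $\mu$ sends $\lambda^{-1}(V)$ into $S_B^{\mathcal{V}} \setminus S_B(\mathcal{V} - V)$, then $V' = f^*V \subset \mathcal{V}'$ inherits all of these properties. In particular $\lambda'^{-1}(V') = f^*\lambda^{-1}(V)$, and the commutative diagram defining the class for $(f^*Y|B', f^*\sigma_{Y|B})$ is obtained by applying the base change functor fibrewise to the corresponding diagram for $(Y|B, \sigma_{Y|B})$.

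The main technical point, which is a matter of setting up parametrised stable homotopy theory carefully rather than a deep issue, is to verify that base change commutes with the fibrewise one-point compactifications $S_B^{(-)}$, with the complement sphere bundles $S_B(\mathcal{V}-V)$, with the retraction of $S_B^{\mathcal{V}} \setminus S_B(\mathcal{V}-V)$ onto $S_B^V$, and with the $\TT$-equivariant homotopy classes $[-,-]^\TT_B$. These are standard compatibilities of the base change functor, see~\cite{CrabbJames} and~\cite{MaySigurdsson}. Once they are in place, the image of the class for $(Y|B, \sigma_{Y|B})$ under the base change functor is represented by precisely the diagram that defines the class for $(f^*Y|B', f^*\sigma_{Y|B})$, proving the theorem; independence of the original class from $V$ then ensures that the conclusion is well-defined.
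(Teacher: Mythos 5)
Your proof is correct and is exactly what the paper has in mind: the paper dispatches this theorem with the single remark that ``inspection of the definitions immediately gives the following structure result,'' and your term-by-term verification that the bundles, monopole map, linearisation, finite-dimensional approximations, and fibrewise sphere constructions all pull back compatibly is precisely that inspection written out.
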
}

Let us see what this means for two distinguished classes of examples:
if~$B'$ or~$B$ is a point.

\subsection{Fibres} 

A morphism from a single point~$b$ into~$B$ as above corresponds to an
identification of~$(X,\sigma_X)$ with the fibre over~$b$.

\parbox{\linewidth}{\begin{corollary}\label{cor:fibres}
  The homomorphism
  \begin{displaymath}
    [S^{\Ker(\lambda)}_B,S^{\Cok(\lambda)}_B]^\TT_B
    \longrightarrow
    [S^{\Ker(\lambda)},S^{\Cok(\lambda)}]^\TT
  \end{displaymath}
  corresponding to the inclusion of a point in~$B$ sends the family
  invariant of a family over~$B$ to the ordinary invariant of its
  fibre~$(X,\sigma_X)$ over that point.
\end{corollary}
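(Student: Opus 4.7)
The plan is to deduce this directly from Theorem~\ref{thm:pullbacks} applied to the inclusion $\iota\colon\{b\}\hookrightarrow B$. The content to check amounts to two compatibilities: (i) the pullback family $\iota^{*}(Y|B,\sigma_{Y|B})$ is the complex spin 4-manifold $(X,\sigma_X)$ regarded as a complex spin family over a one-point base, and (ii) the family Bauer--Furuta class of a one-point family agrees with the ordinary Bauer--Furuta class of~\cite{BauerFuruta}.

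Claim~(i) is immediate from the definitions in~\ref{subsec:ingredients}: pulling back the submersion $p$ along $\iota$ gives the fibre $Y(b)$, which via the chosen identification is $X$, and the relative tangent bundle, the complex spin structure $\sigma_{Y|B}$, the chosen metric and the chosen base connection on $L(\sigma_{Y|B})$ all restrict to the corresponding data on $(X,\sigma_X)$. In particular the bundles $\mathcal{U}$ and $\mathcal{V}$ restrict over $b$ to the usual Hilbert spaces of spinors, one-forms and functions on $X$, and the fibrewise monopole map $\mu(Y|B,\sigma_{Y|B})$ restricts to the original Bauer--Furuta monopole map for $(X,\sigma_X)$.

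For claim~(ii) I would walk through the construction at the end of Section~\ref{sec:familyinvariants} in the case where $B$ is a single point. The subbundle $V\subset\mathcal{V}$ becomes a finite-dimensional linear subspace of $\mathcal{V}$ containing $\Cok(\lambda)$, the fibrewise one-point compactifications $S^{\lambda^{-1}(V)}_B$ and $S^{V}_B$ become the ordinary one-point compactifications of finite-dimensional inner product spaces, and the commutative square which defines the invariant specialises to the one used by Bauer and Furuta. No new idea is needed; it is a matter of observing that the entire construction is pointwise natural in $B$.

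Combining (i) and (ii) with Theorem~\ref{thm:pullbacks} yields the corollary. The only subtlety is making sure that a global choice of $V\subset\mathcal{V}$ on $B$ restricts over $b$ to a subspace still satisfying the two enlargement conditions (containing the cokernel, and so that the sphere bundle $S_B(\mathcal{V}-V)$ is missed). Both requirements are fibrewise by construction over $B$, so they descend to each point $b\in B$ automatically. Since the finite-dimensional approximation can moreover always be enlarged over $b$ without changing the invariant, this presents no genuine obstacle.
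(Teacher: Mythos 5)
Your proposal is correct and follows the same route the paper takes: the corollary is read off from Theorem~\ref{thm:pullbacks} applied to the inclusion of a point, with the only content being that the pullback family over a singleton is $(X,\sigma_X)$ itself and that the fibrewise construction over a one-point base specialises to the ordinary Bauer--Furuta construction. Your steps (i) and (ii) simply make explicit what the paper leaves implicit in the remark preceding the corollary.
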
}

This shows that the ordinary Bauer--Furuta
invariants are contained in the family Bauer--Furuta invariants.

\subsection{Products} 

A morphism from~$B$ to a single point corresponds to the product
family~$B\times X$ with the product complex spin
structure~$B\times\sigma_X$ over~$B$. All data are pullbacks from~$X$,
considered as a family over a point. Note that in this case the plus
bundle is the trivial bundle, and similarly for the Dirac bundle.

\begin{corollary}\label{cor:products}
  The invariant of a product family~$(B\times X, B\times\sigma_X)$ is
  the product of the ordinary invariant of~$(X,\sigma_X)$ with the
  identity of~$B$.
\end{corollary}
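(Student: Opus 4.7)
The plan is to deduce this directly from the pullback functoriality of Theorem~\ref{thm:pullbacks}, applied to the unique map $B\rightarrow *$ from $B$ to a point. View $(X,\sigma_X)$ as a complex spin family over a point, so that its Bauer--Furuta family invariant is, by definition, the ordinary Bauer--Furuta invariant living in $[S^{\Ker(\lambda)},S^{\Cok(\lambda)}]^\TT$. The product family $(B\times X,\,B\times\sigma_X)$ is tautologically the pullback of $(X,\sigma_X)$ along the constant map $B\rightarrow *$, so Theorem~\ref{thm:pullbacks} identifies its invariant with the image of the ordinary invariant under the base change homomorphism
\[
  [S^{\Ker(\lambda)},S^{\Cok(\lambda)}]^\TT \longrightarrow [S^{\Ker(\lambda)}_B,S^{\Cok(\lambda)}_B]^\TT_B.
\]

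The next step is to identify this homomorphism explicitly. For a product family the linearisation over $B$ is the constant family with fibre $\lambda$, so its kernel and cokernel bundles are the trivial bundles with fibres $\Ker(\lambda)$ and $\Cok(\lambda)$; this matches the remark in the statement that both the plus bundle and the Dirac bundle are trivial. As reviewed in~\cite{CrabbJames}, the base change functor from the stable homotopy category over a point to the stable homotopy category over $B$ sends a $\TT$-equivariant stable map $f\colon S^{\Ker(\lambda)}\rightarrow S^{\Cok(\lambda)}$ to the fibrewise map $\mathrm{id}_B\wedge f$ between the trivialised sphere bundles $S^{\Ker(\lambda)}_B$ and $S^{\Cok(\lambda)}_B$. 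This is exactly the ``product of the ordinary invariant with the identity of $B$'' named in the corollary.

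I expect no genuine obstacle beyond bookkeeping. Every step in the construction of the family invariant recalled at the end of Section~\ref{sec:familyinvariants}---choosing an auxiliary subbundle $V\subset\mathcal{V}$, forming $\lambda^{-1}(V)$, performing the fibrewise one-point compactification, and retracting past $S_B(\mathcal{V}-V)$---is visibly compatible with crossing the entire picture with $B$, simply because all data for the product family are constant in the $B$-direction. Thus, once Theorem~\ref{thm:pullbacks} is in hand the corollary is essentially automatic.
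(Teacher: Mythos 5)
Your argument is correct and matches the paper's own (implicit) proof: the corollary is stated as an immediate consequence of Theorem~\ref{thm:pullbacks} applied to the map $B\to *$, with the observation that for a product family the index bundles (plus bundle and Dirac bundle) are trivial, so the base change functor is precisely smashing with the identity of $B$. Your extra paragraph about compatibility of the construction with crossing by $B$ is sound but redundant once Theorem~\ref{thm:pullbacks} is invoked.
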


Therefore the family invariants in product situations are completely
understood in terms of the ordinary invariants.

\subsection{A universal characteristic class}

Fibrewise stable homotopy theory works well only if the base
$B$ is homotopy equivalent to a compact ENR or a finite
CW-complex. In order to handle the universal base
$\BSpiffc(X,\sigma_X)$ for the situation at hand, on better
defines
\begin{displaymath}
  [?,??]^\TT_{\BSpiffc(X,\sigma_X)}\;=\;\lim\limits_B\;[?,??]^\TT_B,
\end{displaymath}
where the limit is over the subspaces~$B$ of
$\BSpiffc(X,\sigma_X)$ which satisfy the finiteness
hypothesis and the corresponding restriction maps. See
Section~II.15 in~\cite{CrabbJames}.
Theorem~\ref{thm:pullbacks} then immediately gives the
following result.

\begin{theorem}\label{thm:universal}
  If~$(X,\sigma_X)$ is a complex spin 4-manifold, the group
  \begin{displaymath}
    [S^{\Ker(\lambda)}_B,S^{\Cok(\lambda)}_B]_{\BSpiffc(X,\sigma_X)}^\TT
  \end{displaymath}
  contains a universal characteristic class for families of
  complex spin 4-manifolds with fibre~$(X,\sigma_X)$.
\end{theorem}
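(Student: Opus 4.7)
The plan is to reduce the statement, essentially by hand, to Theorem \ref{thm:existence} and Theorem \ref{thm:pullbacks}, using only the inverse limit definition of the group written down just above the theorem. Since the universal base $\BSpiffc(X,\sigma_X)$ is typically not itself compact or a finite CW-complex, the point is to assemble compatible invariants over the filtered system of compact ENR (or finite CW) subspaces $B \subset \BSpiffc(X,\sigma_X)$ and then invoke the definition of the relevant hom set as a limit.

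First I would fix, for every compact $B$ in the filtered system, the complex spin family over $B$ obtained by restricting the universal family; by the description of $\BSpiffc(X,\sigma_X)$ in Section~\ref{sec:classifyingspaces} this is a well-defined family of complex spin 4-manifolds with fibre~$(X,\sigma_X)$, well-defined up to the contractible choices of metric and base connection. Theorem~\ref{thm:existence} then produces a class
\begin{displaymath}
  \mu_B \in [S^{\Ker(\lambda_B)}_B, S^{\Cok(\lambda_B)}_B]^\TT_B
\end{displaymath}
for each such $B$. The remarks following Theorem~\ref{thm:existence} ensure that $\mu_B$ does not depend on the auxiliary choices.

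Next I would check compatibility of this system under the inclusions $B \hookrightarrow B'$ of the filtered system. For such an inclusion, the family over $B$ is tautologically the pullback of the family over $B'$, so Theorem~\ref{thm:pullbacks} says exactly that $\mu_B$ is the image of $\mu_{B'}$ under the base change map. Hence $(\mu_B)_B$ defines an element in the inverse limit, which by definition of the right hand side of the display in Theorem~\ref{thm:universal} is the desired class. Universality is then almost formal: any complex spin family with fibre $(X,\sigma_X)$ over a compact ENR base $B_0$ is, up to isomorphism, classified by a map $B_0 \to \BSpiffc(X,\sigma_X)$ which factors through some $B$ in the filtered system; applying Theorem~\ref{thm:pullbacks} again identifies the pullback of the universal class with the Bauer--Furuta family invariant of the given family.

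The only point that requires a little care, and which I would regard as the main obstacle, is the meaning of the symbols $\Ker(\lambda)$ and $\Cok(\lambda)$ in the inverse limit: they are not literal vector bundles over $\BSpiffc(X,\sigma_X)$ but rather a compatible system of virtual bundles, represented on each $B$ by honest vector bundles after the stabilisation step in the construction of $\mu_B$ recalled in Section~\ref{sec:familyinvariants}. One therefore has to observe that the base change functor from \cite{CrabbJames} carries the Thom spectra built from such a virtual index bundle on $B'$ to the Thom spectra built from its pullback on $B$, so that the compatibility of the $\mu_B$ is being tested in a well-defined inverse system. Once that bookkeeping is in place, nothing further is needed beyond citing Theorems~\ref{thm:existence} and~\ref{thm:pullbacks}.
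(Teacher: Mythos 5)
Your proof is correct and matches the paper's in substance: the paper defines the target group as the inverse limit over compact subspaces of $\BSpiffc(X,\sigma_X)$ and asserts that Theorem~\ref{thm:pullbacks} then gives the result immediately, which is exactly the compatible-system argument you spell out using Theorems~\ref{thm:existence} and \ref{thm:pullbacks}. Your additional remarks about the virtual index bundles and universality are fine elaborations of what the paper leaves implicit.
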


Note that this class is non-trivial for trivial reasons in
general: restricted to a point it gives back the ordinary
Bauer--Furuta invariant of~$(X,\sigma_X)$. In this respect,
this characteristic class behaves more like a Thom class
than like an Euler class. In fact, an interpretation in
terms of Thom spectra will be given in the next section.

%%%%%%%%%%%%%%%%%%%%%%%%%%%%%%%%%%%%%%%%%%%%%%%%%%%%%%%%%%%%%%%%%%%%%%%%%%%%%

\section{Cohomotopy classes of Thom spectra}\label{sec:thomspectra}

In this section, we will see how the fibrewise stable
homotopy theory of the previous two sections, which is
conceptionally preferable, can be translated~-- using one of
the basic adjunctions of the game -- into ordinary stable
cohomotopy of Thom spectra, which might be more accessible
for calculations. The motivation for the use of spectra is
the same as in the ordinary case: while the sphere in a
Hilbert space is contractible, it can be approximated by a
sphere spectrum, which is not. Here, Thom spectra for
families of Fredholm operators will be defined, such that
the sphere spectrum can be recovered as the Thom spectrum of
the identity over a singleton.

\subsection{Thom spectra}

Let~$\mathcal U$ and~$\mathcal V$ be bundles of Hilbert
spaces over a compact base~$B$. If~\hbox{$\lambda:\mathcal
U\rightarrow\mathcal V$} is a family of Fredholm operators
over~$B$, we will now see how to obtain a Thom
spectrum~$B^\lambda$ in that situation.

Let us choose a Hilbert space~$H$, which also serves as the
universe for the spectrum, and a
trivialisation~\hbox{$t:\mathcal V\cong\underline H_B$} over
$B$. This exists by a theorem of
Kuiper's~\cite{Kuiper}. Consider then the composition
\begin{displaymath}
  \lambda_t:\mathcal U\longrightarrow\mathcal V\cong\underline
  H_B\longrightarrow H
\end{displaymath}
with the projection onto~$H$. For every sufficiently large
finite-dimensional subspace~$V$ of~$H$ the
pre-image~$\lambda_t^{-1}V$ is a subbundle of~$\mathcal U$
which is mapped to~$V$ by~$\lambda_t$. Let
$B^{\lambda_t^{-1}V}$ be the Thom space of this bundle; this
is the quotient of the fibrewise one-point-compactification
$S^{\lambda_t^{-1}V}_B$ by the section at infinity. If one
enlarges~$V$ to~$W$ the map~$\lambda_t$ induces an
isomorphism
\begin{displaymath}
  B^{\lambda_t^{-1}W}\cong\Sigma^{W-V}B^{\lambda_t^{-1}V}
\end{displaymath}
of Thom spaces. These define the spaces and structure maps
of the Thom spectrum~$B^{\lambda_t}$ of~$\lambda$ with
respect to the trivialisation~$t$. In order to show
that~$B^{\lambda_t}$ is essentially independent of~$t$, let
us consider another trivialisation~$B^{\lambda_t}$. This
differs from~$t$ by an automorphism of~$\underline H_B$,
i.e. by a map from~$B$ to the unitary group of~$H$, which is
contractible again by Kuiper's theorem. Using such a map one
gets an essentially unique identification of the two Thom
spectra~$B^{\lambda_t}$ and~$B^{\lambda_{t'}}$. This
justifies the notation~$B^\lambda$ to be used later.

As for functoriality, given a morphism of another space~$B'$
into~$B$, one may consider the pullback, say~$\lambda'$, of
a maps~$\lambda$ as above. Then there is a commutative
diagram
\begin{center}
  \mbox{ \xymatrix@R=10pt{ \mathcal U\ar[r]^-{\lambda} &
      \mathcal V\ar[r]^-\cong& \underline H_B\ar[dr] & \\
      &&& H.\\ \mathcal U'\ar[r]_-{\lambda'}\ar[uu]
      &\mathcal V'\ar[r]_-\cong\ar[uu] & \underline
      H_{B'}\ar[uu]\ar[ur] & } }
\end{center}
The bundle~$(\lambda')^{-1}V$ over~$B'$ is the pullback of
the bundle~$\lambda^{-1}V$ over~$B$. Thus there is a map
from~$(B')^{(\lambda')^{-1}V}$ to~$B^{\lambda^{-1}V}$.
These fit together to give a morphism
\begin{equation}
  \label{eq:thom_map}
  (B')^{\lambda'}\longrightarrow B^\lambda 
\end{equation}
of spectra.

Everything works without changes in the case of a compact Lie group
acting on everything in sight. See~\cite{Segal:Kuiper}, for the
extension of Kuiper's theorem to this setting.

\subsection{Cohomotopy classes}

If~$T$ is a space, and~$S$ is a space over~$B$, there is a
natural bijection between the set of maps from~$S$ to~$T$
and the set a fibrewise map from~$S$ to~$B\times T$ over
$B$. In the pointed context, this passes to an adjunction
$[S/B,T]\cong[S,B\times T]_B$ between the direct image
functor from the stable homotopy category over~$B$ to the
ordinary stable homotopy category and the pullback functor
in the other direction. Together with the observation that
the direct image of a sphere spectrum~$S^V_B$ is the Thom
spectrum~$S^V_B/B=B^V$, this yields the following result.

\parbox{\linewidth}{\begin{theorem}\label{thm:thom}
  The monopole map of a family of complex spin~4-manifolds
  over~$B$ with fibre~$(X,\sigma_X)$ defines a stable
  cohomotopy class in the
  group~\hbox{$\pi^0_\TT(B^\lambda)$}. The cohomotopy class
  of a pullback family along a map~$B'\rightarrow B$ is the
  composition with the morphism~{\upshape
  (\ref{eq:thom_map})}: the diagram
  \begin{center}
    \mbox{ \xymatrix@C=50pt@R=0pt{B^\lambda\ar[dr]^{\mu}& \\
      & S^0\\
      (B')^{\lambda'}\ar[uu]^{(\ref{eq:thom_map})}\ar[ur]_{\mu'}&
      }}
  \end{center}
  commutes. Therefore, these classes define a universal
  characteristic class in the
  group~$\pi^0_\TT(\BSpiffc(X,\sigma_X)^\lambda)$.
\end{theorem}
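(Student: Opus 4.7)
The plan is to apply the adjunction $[S/B,T]\cong[S,B\times T]_B$ highlighted in the paragraph preceding the theorem, using as input the fibrewise class produced by Theorem~\ref{thm:existence}. The crucial observation, already flagged by the author, is that the direct image over~$B$ of a fibrewise sphere spectrum~$S^V_B$ is the Thom spectrum~$B^V$, so that the direct image of the ``virtual sphere bundle'' associated to~$\Ker(\lambda)-\Cok(\lambda)$ is precisely~$B^\lambda$.

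First I would tighten the identification between the two constructions. Choose a trivialisation $t\colon\mathcal V\cong\underline H_B$ and a sufficiently large finite-dimensional $V\subset H$ as in the definition of~$B^\lambda$. The subbundle $\lambda_t^{-1}V$ then contains~$\Ker(\lambda)$ and maps onto a~$V$ that contains~$\Cok(\lambda)$; as virtual bundles one has $\lambda_t^{-1}V-V=\Ker(\lambda)-\Cok(\lambda)$. Reading the construction in Section~\ref{sec:familyinvariants} one sees that the Bauer--Furuta class of Theorem~\ref{thm:existence} is built at the finite level as a map $S_B^{\lambda^{-1}V}\to S_B^V$; dividing by the section at infinity on the source produces a map $B^{\lambda_t^{-1}V}\to S_B^V/(B\times\{\infty\})=\Sigma^V S^0$, i.e.\ a representative of a class in $\pi^V_\TT(B^{\lambda_t^{-1}V})$. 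Stabilising in~$V$ and invoking Kuiper's theorem to trivialise the dependence on~$t$ gives the desired element in $\pi^0_\TT(B^\lambda)$. The independence from~$V$ of the class is already contained in the commutative diagram of Section~\ref{sec:familyinvariants}, transported across the adjunction.

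Functoriality under pullback is then formal. By construction, the map~(\ref{eq:thom_map}) is assembled at each level~$V$ from the pullback of bundles $(\lambda')^{-1}V\to\lambda^{-1}V$ covering~$B'\to B$. On the other hand, Theorem~\ref{thm:pullbacks} identifies the invariant of the pullback family with the image of the original invariant under the base-change functor; under the direct-image/pullback adjunction this base change translates precisely into precomposition with~(\ref{eq:thom_map}). The commutativity of the triangle in the statement is thus naturality of the adjunction, applied to the representatives constructed in the previous paragraph. For the universal class, one passes to the inverse limit over compact ENR subspaces $B\subseteq\BSpiffc(X,\sigma_X)$: the transition maps are compatible by the functoriality just established, the target $\pi^0_\TT(\BSpiffc(X,\sigma_X)^\lambda)$ is by definition the corresponding $\lim$, and the limit of the fibrewise classes matches the limit defining $[?,??]^\TT_{\BSpiffc(X,\sigma_X)}$ in the previous section.

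The main obstacle I expect is the bookkeeping in the first step: carefully matching the finite-dimensional approximations $V\subset H$ used in the Thom spectrum with the subspaces $V\subset\mathcal V$ used to define the fibrewise class, and checking that the dashed arrow from the diagram in Section~\ref{sec:familyinvariants} transforms under the adjunction into the structure maps of~$B^\lambda$. Once this identification is laid out cleanly, both the naturality under pullback and the passage to the universal base are purely formal consequences of the adjunction and of Theorem~\ref{thm:pullbacks}.
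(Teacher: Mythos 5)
Your approach is the paper's approach: use the adjunction $[S/B,T]\cong[S,B\times T]_B$ together with the fact that the direct image of $S^V_B$ is the Thom space $B^V$, check compatibility at finite levels, and pass to the limit for the universal base. One small slip: in the second paragraph, the adjunction leaves the target untouched, so the adjoint of $S_B^{\lambda^{-1}V}\to S_B^V=B\times S^V$ is a map $B^{\lambda_t^{-1}V}\to S^V$; your identification $S_B^V/(B\times\{\infty\})=\Sigma^V S^0$ is incorrect (that quotient is $\Sigma^V B_+$), though this does not affect the class you end up with.
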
}

The group~$\pi^0_\TT(\BSpiffc(X,\sigma_X)^\lambda)$ should
be thought of as the cohomotopy of the classifying
space~$\BSpiffc(X,\sigma_X)$ in degree~$\lambda$.

%%%%%%%%%%%%%%%%%%%%%%%%%%%%%%%%%%%%%%%%%%%%%%%%%%%%%%%%%%%%%%%%%%%%%%%%%%%%%

\section{Families of K3 surfaces}\label{sec:K3}

Just as many of examples of smooth 4-manifolds are given as
complex surfaces, holomorphic families may be quarried for
interesting examples of smooth families. However, finding
non-trivial complete families of complex surfaces is
esteemed to be hard. A classical example can be constructed
from the tautological family of~4-tori over the moduli
space~$SO(4)/U(2)\cong S^2$ of complex structures on~$\RR^4$: the fibrewise Kummer construction yields a
holomorphic family of K3 surfaces over the projective line,
see~\cite{Atiyah:Kummer}, or \cite{Borcherdsetal} for a
recent appearance. More generally, to illustrate the
preceding results, let us consider families of K3 surfaces
over~$S^2$ with the standard (real) spin structure. The
family need not come with a real spin structure on the
relative tangent bundle, but there will always exits a
complex spin structure which restricts to the spin
structures on the fibres. The complex rank of the associated
Dirac bundle is~$-s(K3)/8=2$, and the rank of the plus
bundle is~\hbox{$b^+(K3)=3$}. The bundle of self-dual
harmonic~2-forms is clearly trivial since the base of the
family is simply-connected. However, the Dirac bundle need
not be trivial. In fact, the universal Kummer family above
is distinguished by the fact that its Dirac bundle has degree
-2, see~\cite{Atiyah:Kummer}. In order to say something in
general, let us pause to review some homotopy theory of Thom
spaces.

Let~$V$ be a complex vector bundle of rank~$r$ over a
connected space~$B$. If~$B$ has a~CW-filtration with
quotients which are wedges of spheres~$S^n$, the Thom
space~$B^V$ has a filtration with quotients which are wedges
of~$\TT$-spheres~$S^{r\CC+n}$. This is non-equi\-variantly a~CW-filtration. The bottom cell of the Thom space is given by
the class of a compactified fibre. If the base space~$B$ is
itself an~$n$-sphere, the situation is particularly
easy. The CW-structure with two cells leads to a cofibre
sequence
\begin{equation}\label{cofibre_sequence_over_S^n}
  S^{r\CC+(n-1)}\longrightarrow S^{r\CC}\longrightarrow B^V\longrightarrow
  S^{r\CC+n}\longrightarrow S^{r\CC+1}.
\end{equation}
Therefore, the~$\TT$-equi\-variant stable homotopy type of
the Thom space of~$V$ is described by an element~$J_V$
in~$[S^{r\CC+(n-1)},S^{r\CC}]^\TT\cong\pi^\TT_{n-1}$. The
assignment~$V\mapsto J_V$ is a version of
the~J-homomorphism.

Mapping the cofibre sequence
\eqref{cofibre_sequence_over_S^n} into a~$\TT$-trivial
sphere~$S^b$, one obtains a long exact sequence
\begin{displaymath}
  [S^{r\CC+(n-1)},S^b]^\TT\leftarrow [S^{r\CC},S^b]^\TT\leftarrow
  [B^V,S^b]^\TT\leftarrow [S^{r\CC+n},S^b]^\TT\leftarrow 
  [S^{r\CC+1},S^b]^\TT.
\end{displaymath}
Note that in the particular case~$n=2$ the left most and
right most groups are the same. This is relevant in the
case~$r=2$,~$b=3$, and~$n=2$ encountered for the K3 families
above. The structure of the sequence
\begin{equation}\label{special case}
  [S^{2\CC+1},S^3]^\TT\leftarrow[S^{2\CC},S^3]^\TT\leftarrow
  [B^V,S^3]^\TT\leftarrow[S^{2\CC+2},S^3]^\TT\leftarrow
  [S^{2\CC+1},S^3]^\TT
\end{equation}
is explained by the following result.

\parbox{\linewidth}{\begin{proposition}\label{sample calculation}  
  Let~$V$ be a complex vector bundle of rank 2 over a
  2-sphere~$B$. Consider the map
  \begin{displaymath}
        \ZZ\cong[S^{2\CC},S^3]^\TT \longleftarrow [B^V,S^3]^\TT
  \end{displaymath}
  induced by the inclusion~$S^{2\CC}\subset B^V$ of a fibre.
  The degree of~$V$ is odd if and only if this map is
  injective with a cokernel of order 2. It is even if and
  only if this map is surjective with a kernel of order 2.
\end{proposition}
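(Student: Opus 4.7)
My plan is to apply the contravariant functor $[-,S^3]^\TT$ to the cofibre sequence \eqref{cofibre_sequence_over_S^n} in the case $r=n=2$, producing exactly the five-term exact sequence \eqref{special case} already displayed. By exactness, the restriction map $i^*\colon[B^V,S^3]^\TT\to[S^{2\CC},S^3]^\TT=\ZZ$ induced by the fibre inclusion satisfies
\begin{displaymath}
  \operatorname{image}(i^*)\;=\;\ker\bigl(J_V^*\bigr)
  \quad\text{and}\quad
  \ker(i^*)\;=\;\operatorname{coker}\bigl((\Sigma J_V)^*\bigr),
\end{displaymath}
where $J_V^*$ is the map from $\ZZ=[S^{2\CC},S^3]^\TT$ to $[S^{2\CC+1},S^3]^\TT$ induced by the attaching class $J_V$, and the second identity uses the connecting homomorphism out of $[S^{2\CC+2},S^3]^\TT$. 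The task is thus to compute these kernels and cokernels in terms of $d=\deg V$.

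The next step is to identify $J_V$. Complex rank-$2$ bundles over $S^2$ are classified up to isomorphism by their first Chern number, so the equivariant $J$-homomorphism, being additive on $KU_\TT(S^2)$, forces $J_V=d\cdot J_0$, where $J_0$ is the $J$-class of a fixed rank-$2$ bundle of degree one. The technical heart of the argument is then to single out, inside $[S^{2\CC+1},S^3]^\TT$ (and inside $\operatorname{coker}((\Sigma J_V)^*)$), a distinguished element of order exactly $2$ onto which $J_0^*$ maps a generator, and to check that this element corresponds, under restriction to the $\TT$-fixed points $S^0\subset S^{2\CC}$, to the nonequivariant Hopf class $\eta\in\pi_1^s$. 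Consequently $J_V^*$ and $(\Sigma J_V)^*$ become multiplication by $d$ into a copy of $\ZZ/2$.

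Given these identifications the dichotomy is immediate. When $d$ is odd, $J_V^*$ is surjective onto $\ZZ/2$, so $\ker(J_V^*)=2\ZZ$ and hence $\operatorname{image}(i^*)=2\ZZ$; symmetrically $(\Sigma J_V)^*$ is surjective, so $\operatorname{coker}((\Sigma J_V)^*)=0$ and $i^*$ is injective. When $d$ is even, both $J_V^*$ and $(\Sigma J_V)^*$ vanish, so $i^*$ is surjective and its kernel inherits the copy of $\ZZ/2$ from $\operatorname{coker}((\Sigma J_V)^*)$.

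The main obstacle is the middle step: pinning down the equivariant class $J_V$ and the corresponding $\ZZ/2$ in the target groups. Nonequivariantly, the analogous statement reduces to the standard fact that $\eta\in\pi_1^s$ has order $2$; in the $\TT$-equivariant setting one has additionally to verify that this $\ZZ/2$ summand is detected on $\TT$-fixed points, so that the equivariant $J$-homomorphism for $V$ is controlled by its underlying nonequivariant counterpart. Once that is in place, exactness of \eqref{special case} yields the stated result at once.
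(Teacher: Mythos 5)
Your skeleton matches the paper's: you apply $[-,S^3]^\TT$ to the cofibre sequence to produce \eqref{special case}, identify the two flanking maps as composition with $J_V$ and $\Sigma J_V$, and read off $\operatorname{image}(i^*)=\ker(J_V^*)$ and $\ker(i^*)=\operatorname{coker}((\Sigma J_V)^*)$. So far so good. Your intermediate step $J_V=d\cdot J_0$ (from additivity of the $J$-homomorphism together with $V-2\underline\CC=d(L_1-1)$ in $\tilde K(S^2)$) is a reasonable alternative to the paper's route, which instead works directly with the underlying real bundle of $V$; both reduce the proposition to showing that some ``nontriviality'' of $J_V$ is detected in a $\ZZ/2$.

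The genuine problem is your proposed detection mechanism. You say the relevant order-$2$ class is detected ``under restriction to the $\TT$-fixed points $S^0\subset S^{2\CC}$,'' and that the equivariant $J$-class is thereby ``controlled by its underlying nonequivariant counterpart.'' These are two \emph{different} maps, and the one you name doesn't work. The $\TT$-fixed points of $S^{2\CC+1}$ are $S^1$ (since $0$ is the only fixed vector in $2\CC$), so the fixed-point restriction is
\begin{displaymath}
  [S^{2\CC+1},S^3]^\TT\longrightarrow[S^1,S^3]=\pi^s_{-2}=0,
\end{displaymath}
which is identically zero and detects nothing; moreover $J_V$ itself restricts to a nullhomotopic map on fixed points, because $(B^V)^\TT=B_+=S^2_+\simeq S^2\vee S^0$ so that the bottom attaching map $S^1\to S^0$ is trivially split. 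What the paper uses is the \emph{forgetful} map $\Phi\colon[\cdot]^\TT\to[\cdot]$, which forgets the $\TT$-action and replaces $2\CC$ by $\RR^4$; it lands in $[S^5,S^3]=\pi^s_2\cong\ZZ/2$ and $[S^6,S^3]=\pi^s_3\cong\ZZ/24$, is shown to be an isomorphism, respectively injective, on the two $\ZZ/2$'s, and is compatible with composition. Under $\Phi$, the class $J_V$ goes to the nonequivariant $J$-class of the underlying real bundle of $V$ in $\pi^s_1\cong\ZZ/2$, and this is where the parity of $\deg V$ enters. You also tacitly rely on the identifications $[S^{2\CC},S^3]^\TT\cong\ZZ$, $[S^{2\CC+1},S^3]^\TT\cong\ZZ/2$, and $[S^{2\CC+2},S^3]^\TT\cong\ZZ/2$; these are computed in the paper's proof (the third requires a separate cofibre-sequence argument with $S(2\CC)$ and $D(2\CC)$) and are not merely read off from \eqref{special case}. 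To repair your argument, replace the fixed-point restriction by the forgetful map throughout and supply the three group calculations.
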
}

\begin{proof}
  Let us consider the exact sequence~\eqref{special
  case}. The two maps on the left and on the right are given
  by multiplication with~$J_V$. To start with, let us look
  at the structure of the groups involved in~\eqref{special
  case}. It is as follows.
  \begin{displaymath}
    {[}S^{2\CC},S^3{]}^\TT\cong\ZZ\hspace{25pt}
    {[}S^{2\CC+1},S^3{]}^\TT\cong\ZZ/2\hspace{25pt}
    {[}S^{2\CC+2},S^3{]}^\TT\cong\ZZ/2
  \end{displaymath}
  The first two isomorphism are easy. One may proceed as in
  \cite{BauerFuruta}. Let us turn to the third one. The
  group~$[S^{2\CC+2},S^3]^\TT\cong[S^{2\CC},S^1]^\TT$ is a
  cokernel of the homomorphism
  \begin{displaymath}
    p^*:[S^0,S^0]^\TT\cong[D(2\CC)_+,S^0]^\TT\longrightarrow[S(2\CC)_+,S^0]^\TT
  \end{displaymath}
  induced by the projection~$p:S(2\CC)_+\subset
  D(2\CC)\simeq_\TT S^0$ which sends~$S(2\CC)$ to the point
  which is not the basepoint. While~$[S^0,S^0]^\TT$ is a
  copy of the integers, generated by the
  identity,~$[S(2\CC)_+,S^0]^\TT$ is isomorphic
  to~$\ZZ\oplus\ZZ/2$, the copy of the integers being
  generated by~$p$. Of course,~$p^*(\mathrm{id})=p$, so
  that~$p^*$ is a split injection, induces an isomorphism
  between the copies of the integers, and has
  cokernel~$\ZZ/2$, as claimed. Up to isomorphism the exact
  sequence~\eqref{special case} thus looks like
  \begin{displaymath}
    \ZZ/2\longleftarrow\ZZ\longleftarrow[B^V,S^3]^\TT
    \longleftarrow\ZZ/2\longleftarrow\ZZ/2.
  \end{displaymath}
  Note that the forgetful
  map~\hbox{$\Phi:[S^{2\CC},S^3]^\TT\rightarrow[S^4,S^3]\cong\ZZ/2$}
  is surjective. By~$\pi_*$-linearity it follows that the
  forgetful
  map~\hbox{$\Phi:[S^{2\CC+1},S^3]^\TT\rightarrow[S^5,S^3]\cong\ZZ/2$}
  is an isomorphism and that the forgetful
  map~\hbox{$\Phi:[S^{2\CC+2},S^3]^\TT\rightarrow[S^6,S^3]\cong\ZZ/24$}
  is injective. It follows that the two maps on the left and
  on the right of~\eqref{special case} are trivial if and
  only if~$\Phi(J_V)$ is zero. They are surjective if and
  only if~$\Phi(J_V)$ is non-zero. However, in the case of a
  2-sphere,~$\Phi(J_V)$ is trivial if and only the
  underlying real bundle of~$V$ is trivial if and only the
  degree of~$V$ is odd.
\end{proof}

In the case of a family of K3 surfaces, the map in
Proposition~\ref{sample calculation} must be surjective: the family
invariant maps to the ordinary Bauer--Furuta invariant of K3, which is
a generator, as the isomorphism with~$\ZZ$ sends it to the
Seiberg-Witten invariant of~$K3$, which is~$\pm1$. This proves the
following result.

\begin{theorem}\label{index for families of K3 surfaces}
  Given a complex spin family of K3 surfaces over~$S^2$ with the
  standard spin structure in each fibre, the index bundle of the Dirac
  operator has even degree.
\end{theorem}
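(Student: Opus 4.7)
The plan is to apply Proposition~\ref{sample calculation} to the Dirac bundle $V$ itself and derive a contradiction from the assumption of odd degree. First I would unpack what the family invariant looks like in this context. By Theorem~\ref{thm:thom}, the invariant of the family lives in the cohomotopy group of a Thom spectrum; since the plus bundle is trivial of rank $3$ (the base $S^2$ is simply-connected) and the Dirac bundle $V$ has complex rank $2$, the relevant spectrum is a shift of the Thom space $B^V$, and the invariant is naturally a class in $[B^V,S^3]^\TT$. This is exactly the group that Proposition~\ref{sample calculation} analyses, so the setup matches.

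Next I would invoke Corollary~\ref{cor:fibres} to see that the restriction map
\begin{displaymath}
  [B^V,S^3]^\TT \longrightarrow [S^{2\CC},S^3]^\TT \cong \ZZ,
\end{displaymath}
induced by the inclusion of a single fibre $S^{2\CC}\subset B^V$, sends the family invariant to the ordinary Bauer--Furuta invariant of the fibre, a K3 surface equipped with its standard spin structure. The key input is then the identification of this ordinary invariant with the Seiberg--Witten invariant under the isomorphism $[S^{2\CC},S^3]^\TT\cong\ZZ$ from Proposition~\ref{sample calculation}, which takes value $\pm 1$ for K3. In particular, the image of the family invariant is a generator of $\ZZ$.

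Now I would combine the two dichotomies from Proposition~\ref{sample calculation}. If the degree of $V$ were odd, the restriction map would be injective with cokernel of order $2$; its image would therefore be contained in the proper subgroup $2\ZZ$ and could not contain the generator $\pm 1$. This contradicts the previous paragraph, so the degree of $V$ must be even. I do not expect any genuinely hard step: the whole argument is an application of the structure theorem Proposition~\ref{sample calculation} together with the functoriality of Corollary~\ref{cor:fibres}. The only subtlety worth double-checking is the normalisation identifying the ordinary Bauer--Furuta invariant of K3 with the Seiberg--Witten invariant $\pm 1$, but this is standard and used implicitly throughout the paper's references to~\cite{BauerFuruta}.
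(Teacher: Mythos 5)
Your proof is correct and follows exactly the paper's argument: use Theorem~\ref{thm:thom} and Corollary~\ref{cor:fibres} to place the family invariant in $[B^V,S^3]^\TT$ restricting to the generator of $[S^{2\CC},S^3]^\TT\cong\ZZ$ given by the Bauer--Furuta/Seiberg--Witten invariant of K3, and then read off the parity of $\deg V$ from the dichotomy of Proposition~\ref{sample calculation}. The only cosmetic difference is that you phrase it as a contradiction while the paper argues directly that the restriction map must be surjective.
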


Note that the family invariants are not determined by the ordinary
invariants: each time there are two possible pre-images. It would be
interesting to find families realizing the two different
possibilities.

The universal Kummer family shows that the result is best
possible. Though one is tempted to phrase it in terms of the first
Chern class of the universal Dirac bundle over~$\BSpiff^c(K3)$, this
would require knowledge of the homomorphism
\begin{displaymath}
  \H^2(\BSpiff^c(K3);\ZZ)\longrightarrow\Hom(\pi_2(\BSpiff^c(K3)),\ZZ).
\end{displaymath}
The behaviour of this map depends on the fundamental group
of~$\BSpiff^c(K3)$, which is a problem of its own. The
methods of the following section may be used to gain
information about~$\pi_1(\BSpiff^c(X,\sigma_X))$ in general.

%%%%%%%%%%%%%%%%%%%%%%%%%%%%%%%%%%%%%%%%%%%%%%%%%%%%%%%%%%%%%%%%%%%%%%%%%%%%%

\section{Mapping tori}\label{sec:mapping_tori}

Let~$X$ be a 4-manifold as before. Every diffeomorphism of~$X$ induces
an automorphism of~$\H^2(X)$ which preserves the intersection
form. As~$X$ is simply-connected, the kernel of the resulting
homomorphism from~$\Diff(X)$ to~$\Aut(\H^2(X))$ is known to be the
group of diffeomorphisms pseudo-isotopic to the identity,
see~\cite{Kreck}. However, pseudo-isotopy does not imply isotopy in
dimension~4, see~\cite{Kwasik}. In this section, we show how the
family invariants introduced above may be used to study these
diffeomorphisms.

If~$f$ is a diffeomorphism of~$X$ which is homologically trivial, and
$\sigma_X$ is any complex spin structure on~$X$, there is an
isomorphism~$u$ from~$f^*\sigma_X$ to~$\sigma_X$. The pair~$(f,u)$
gives rise to a~$\ZZ$-action on~$(X,\sigma_X)$. As the discussion of
Section~\ref{sec:classifyingspaces} shows, the surjection from
$\Spiffc(X,\sigma_X)$ to the group of homologically trivial
diffeomorphisms of~$X$ induces an isomorphism on
components. Therefore, the choice of~$u$ does not matter, and need no
longer be subject of our discussion. Let~$Y$ be the mapping torus
of~$f$, the quotient~$X\times_\ZZ\RR$ of~$X\times\RR$ by
the~$\ZZ$-action generated by~$(x,t)\mapsto(f(x),t-1)$. The projection
to the second component displays~$Y$ as the total space of a family
over the circle~$C=\RR/\ZZ$. In the situation given, the index
bundle~$\lambda$ is trivial: the Dirac bundle is trivial as any
virtual complex vector bundle over a circle is trivial, and the plus
bundle is trivial by assumption on~$f$. In fact, the map induced
by~$f$ on~$\H^+(X)$ is the identity, so that the
projection~\hbox{$\H^+(X)\times\RR\rightarrow\H^+(X)\times_\ZZ\RR$}
descends to an
isomorphism~\hbox{$\H^+(X)\times(\RR/\ZZ)\cong\H^+(X)\times_\ZZ\RR$}. As
for the Dirac bundle, the situation is more complicated~-- due to the
lack of control over the action of~$f$ on the space of harmonic
spinors. There does not seem to be a preferred trivialisation, but the
following proposition comes for rescue.

\begin{proposition}
  Let~$A\colon S^1\rightarrow U(r)$ be any map. Then the induced
  self-map of the Thom space~$\Sigma^{r\CC}S^1_+$ of the trivial
  rank~$r$ bundle over~$S^1$ is stably~$\TT$-homotopic to the
  identity.
\end{proposition}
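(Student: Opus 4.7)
The plan is to reduce the question to a single generator of $\pi_1(U(r))$, and then combine a null-homotopy argument in $SU$ with the equivariant structure to show the induced self-map is stably trivial.

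First, since the induced self-map $\phi_A$ depends up to stable $\TT$-homotopy only on the homotopy class of $A$, and since $\phi_{A_1 A_2} = \phi_{A_1}\circ \phi_{A_2}$ (with $A_1 A_2$ the pointwise product), the correspondence $A \mapsto \phi_A$ yields a group homomorphism from $\pi_1(U(r)) \cong \ZZ$ to the group of stable $\TT$-equivariant self-equivalences of $\Sigma^{r\CC}S^1_+$. It thus suffices to verify triviality on a generator, say $A(z) = \operatorname{diag}(z, 1, \ldots, 1)$. The induced self-map factors as $\operatorname{id}\wedge \phi$, where $\phi\colon \Sigma^\CC S^1_+ \to \Sigma^\CC S^1_+$ is the ``model'' map $(s,v) \mapsto (s, sv)$, reducing the proof to showing $\phi$ is stably $\TT$-homotopic to the identity.

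For the model case I would enlarge the target by suspending with another copy of $S^\CC$: consider the clutching map $\tilde A\colon S^1 \to U(2)$ with $\tilde A(z) = \operatorname{diag}(z, z^{-1})$. Since $\det \tilde A \equiv 1$, this map lands in $SU(2) \cong S^3$, which is simply-connected, and is therefore null-homotopic through $U(2)$. The induced self-map of $\Sigma^{2\CC}S^1_+$ is consequently $\TT$-equivariantly unstably homotopic to the identity via the path of unitary automorphisms. At the same time, this self-map factors as the composition of $\Sigma^\CC \phi$ (acting on the first summand) with the map twisting the second summand by $z^{-1}$; using that the swap $\tau \in U(2)$ of the two $\CC$-summands is connected to the identity, the second factor can be transported, stably and $\TT$-equivariantly, onto the first summand as $\Sigma^\CC \phi^{-1}$.

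The main obstacle is that the above observation, taken naively, only reproduces the trivial identity $\phi \circ \phi^{-1} = \operatorname{id}$. To actually conclude $[\phi] = 0$ rather than merely $[\phi] + [\phi^{-1}] = 0$, one needs a finer equivariant input. The residual class lives in the equivariant stable $1$-stem $\pi_1^\TT(\mathbb{S})$ and may be identified with the equivariant complex $J$-homomorphism image of the generator of $\pi_1 U(1)$. I would discharge this obstruction by appealing to the equivariant version of Kuiper's theorem, as invoked in Section~\ref{sec:thomspectra}: enlarging $r$ and passing to the colimit, the group of stable $\TT$-automorphisms of $\Sigma^{r\CC}S^1_+$ becomes $\TT$-equivariantly contractible, so any class induced from a map $S^1 \to U(r)$ is null in the colimit, yielding the desired stable $\TT$-homotopy.
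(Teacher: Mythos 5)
There is a genuine gap in the final step, and it is precisely the step you flag as the ``main obstacle.'' You correctly reduce to a generator of $\pi_1 U(r)$, and you are right that the $SU(2)$-clutching trick only buys you the vacuous relation $[\phi]+[\phi^{-1}]=0$. But the appeal to ``equivariant Kuiper's theorem after passing to the colimit'' does not close the gap. Stabilising in $r$ gives a map from $U(\infty)=\operatorname{colim}_r U(r)$ to stable self-equivalences of the Thom spectrum, and $U(\infty)$ is \emph{not} contractible: $\pi_1 U(\infty)\cong\ZZ$, and the generator you reduced to still has nonzero image there. Kuiper's theorem is about $U(H)$ for an infinite-dimensional Hilbert space $H$ in the norm topology, which is a different space from $U(\infty)$; and a path in $U(H)$ does not act on the finite-dimensional subbundles defining the Thom space, so it does not obviously produce the stable homotopy you want. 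Your identification of the residual class with an equivariant $J$-homomorphism image is a reasonable guess, but you would then need an independent argument to show that class vanishes, which you do not supply.

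The paper's own argument is shorter and avoids this: after splitting stably as $\Sigma^{r\CC}S^1_+\simeq_\TT S^{r\CC+1}\vee S^{r\CC}$, the induced self-map is tautologically the identity on the basepoint summand $S^{r\CC}$, and on the other summand one uses that the $\TT$-fixed-point restriction
\begin{displaymath}
[S^{r\CC+1},S^{r\CC+1}]^\TT\longrightarrow[S^1,S^1]\cong\ZZ
\end{displaymath}
is an isomorphism carrying the identity to the identity. Since $A(z)$ acts only on the $\CC$-summands, which have no $\TT$-fixed points, the self-map is the identity on fixed points and hence the identity. You might revisit your argument with this ``detect by fixed points'' tool in mind: it is exactly what discharges the class you were worried about.
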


\begin{proof}
  The induced self-map of~$\Sigma^{r\CC}S^1_+$, which is the quotient
  of~$S^1\times S^{r\CC}$ by the section at infinity, is given by the
  formula
  \begin{equation}
    \label{eq:self-map}
    (z,x)\longmapsto(z,A(z)\cdot x).
  \end{equation}
  Stably,~$S^1_+$ splits as~$S^1\vee S^0$, and the self-map respects
  the ensuing decomposition~\hbox{$\Sigma^{r\CC}S^1_+\simeq_\TT
    S^{r\CC+1}\vee S^{r\CC}$} such that it is the identity on the
  second summand which corresponds to the base point. It remains to
  be shown that it is~$\TT$-homotopic to the identity on the first
  summand. But, the fixed point map
  \begin{displaymath}
    [S^{r\CC+1},S^{r\CC+1}]^\TT\longrightarrow[S^1,S^1]\cong\ZZ
  \end{displaymath}
  is an isomorphism sending the identity to the identity, and the
  self-map~(\ref{eq:self-map}) induced by~$A$ is clearly the identity on
  fixed points.
\end{proof}

Any trivialisation of the Dirac bundle gives rise to an equivalence
$C^\lambda\simeq_\TT\Sigma^\lambda C_+$. Given two trivialisations,
the difference is a map~$A$ as in the preceding proposition. As a
consequence, the equivalence does not depend on the choice of the
trivialisation, and we obtain from~$f$ a well-defined invariant in
$\pi^0_\TT(\Sigma^\lambda C_+)$. On balance, the element may depend on
$f$, but the group does not.

If the diffeomorphism~$f$ is isotopic to another diffeomorphism~$g$,
there is an isotopy from the identity to~$gf^{-1}$, which can be used
to define a diffeomorphism from the mapping torus of~$f$ to the
mapping torus of~$g$ over the circle. Therefore, the family
invariants of the two mapping tori agree.

\begin{theorem}\label{thm:vanishing}
  If~$f$ is isotopic to~$g$, then the invariants of them agree.
\end{theorem}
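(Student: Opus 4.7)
The plan is to convert an isotopy from~$f$ to~$g$ into an isomorphism of complex spin families from the mapping torus~$Y_f$ to the mapping torus~$Y_g$ covering the identity on the base circle~$C$, and then to quote the functoriality of the invariant (Theorem~\ref{thm:pullbacks}) to conclude that the two classes in~$\pi^0_\TT(C^\lambda)$ agree.

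To build the diffeomorphism, I would choose a smooth isotopy~$(H_t)_{t\in[0,1]}$ of diffeomorphisms of~$X$ with~$H_0=\mathrm{id}$ and~$H_1=g^{-1}f$, arranged to be constant near the endpoints so that the extension to follow is smooth. Extend~$H$ to all~$t\in\RR$ by the equivariance rule~$H_{t-1}=g\circ H_t\circ f^{-1}$, which at~$t=1$ is consistent with~$H_0=\mathrm{id}$. This rule is exactly what is needed for~$(x,t)\mapsto(H_t(x),t)$ to intertwine the~$\ZZ$-action~$(x,t)\mapsto(f(x),t-1)$ with the action~$(x,t)\mapsto(g(x),t-1)$, so the formula descends to a fibrewise diffeomorphism~$\Phi\colon Y_f\rightarrow Y_g$ over~$C$.

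To lift~$\Phi$ to an isomorphism of complex spin families, note that the homomorphism~$\Diff(X)\rightarrow\Aut(\H^2(X))$ is locally constant, so the isotopy stays inside the subgroup of homologically trivial diffeomorphisms. Using the isomorphism on~$\pi_0$ between~$\Spiffc(X,\sigma_X)$ and that subgroup recorded in Section~\ref{sec:classifyingspaces}, the isotopy~$H$ admits an essentially unique lift to~$\Spiffc(X,\sigma_X)$; this lift promotes~$\Phi$ to an isomorphism of complex spin families. An application of Theorem~\ref{thm:pullbacks} through this isomorphism then equates the invariants of~$Y_f$ and~$Y_g$.

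The only delicate point is to ensure smoothness of the extended family~$H_t$ at the integers; this is automatic once the original isotopy is arranged to be constant near its endpoints, so the main body of the argument really is just the translation of ``isotopic diffeomorphisms give isomorphic mapping tori'' into the complex spin setting, with no further analysis of the monopole map required.
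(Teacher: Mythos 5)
Your proposal is correct and follows essentially the same route as the paper, which also converts an isotopy (there chosen from the identity to~$gf^{-1}$, a cosmetic difference from your~$g^{-1}f$) into a diffeomorphism of mapping tori over the circle and then appeals to functoriality. You have merely made explicit the details the paper leaves implicit: the equivariant extension of the isotopy, the smoothness adjustment near the endpoints, and the lift to the complex spin setting via the~$\pi_0$-isomorphism from Section~\ref{sec:classifyingspaces}.
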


The invariant of the identity need not be trivial unless the ordinary
invariant of~$(X,\sigma_X)$ is trivial. This deficiency can easily be
remedied as follows. For the identity, the family is trivial, and the
characteristic class is the pullback of the ordinary Bauer--Furuta
invariant. In general, the family invariant of the mapping torus
of~$f$ restricts to the same class in~$\pi^0_\TT(S^\lambda)$ as does
the pullback: the images are just the ordinary invariants of the
fibre~$(X,\sigma_X)$. Therefore, their difference is an element in the
kernel of the restriction. This kernel is identified with the 0-th
cohomotopy group of the sphere~$\Sigma^\lambda C$ by the remarks
above. This difference will be referred to as the {\it reduced
  invariant} of~$f$.

\begin{corollary}
  If~$f$ is a diffeomorphism isotopic to the identity, then the
  reduced invariant of~$f$ is zero.
\end{corollary}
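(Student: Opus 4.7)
The plan is to deduce the corollary directly from Theorem~\ref{thm:vanishing} together with the description of the family invariant of the trivial mapping torus supplied by Corollary~\ref{cor:products}.

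First I would verify that the reduced invariant of the identity diffeomorphism vanishes. The mapping torus of $\mathrm{id}_X$ is the product family $X\times C\to C$, with the product complex spin structure $\sigma_X\times C$. By Corollary~\ref{cor:products}, its family Bauer--Furuta invariant is the product of the ordinary Bauer--Furuta invariant of $(X,\sigma_X)$ with the identity of~$C$. This is, tautologically, the same class as the pullback of the ordinary invariant along the collapse map $C\to\mathrm{pt}$. Since the reduced invariant has been defined as the difference between the family invariant and this pullback, the reduced invariant of $\mathrm{id}_X$ is zero in $\pi^0_\TT(\Sigma^\lambda C)$.

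Next, given an arbitrary~$f$ isotopic to $\mathrm{id}_X$, Theorem~\ref{thm:vanishing} identifies the family invariants of the mapping tori of~$f$ and of~$\mathrm{id}_X$ as the same class in~$\pi^0_\TT(C^\lambda)$. The pullback term in the definition of the reduced invariant depends only on the ordinary invariant of the fibre $(X,\sigma_X)$ and not on~$f$ at all, so it is the same for both diffeomorphisms. Subtracting two equal classes from two equal classes, the reduced invariant of~$f$ equals the reduced invariant of~$\mathrm{id}_X$, which was just shown to vanish.

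There is essentially no obstacle here: the substantive content has been shifted into Theorem~\ref{thm:vanishing}. The only delicate point is ensuring that both reduced invariants are compared in the same group. The identification $C^\lambda\simeq_\TT\Sigma^\lambda C_+$ used to land the reduced invariant in $\pi^0_\TT(\Sigma^\lambda C)$ has already been shown, via the proposition on self-maps induced by $A\colon S^1\to U(r)$, to be independent of the chosen trivialisation of the Dirac bundle. Consequently the comparison between the invariants associated to~$f$ and to~$\mathrm{id}_X$ is canonical, and the reduction sketched above goes through without further choices.
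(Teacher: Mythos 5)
Your proposal is correct and follows the same route the paper takes: the reduced invariant of the identity vanishes because the trivial family's invariant is exactly the pullback of the ordinary invariant (Corollary~\ref{cor:products}), and Theorem~\ref{thm:vanishing} transports this to any $f$ isotopic to the identity, since the pullback term is manifestly independent of~$f$. The remark on well-definedness via the $U(r)$-proposition is also precisely the point the paper makes just before the corollary.
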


The reduced invariants also behave well under iterations. Given a
diffeomorphism~$f$ and an integer~$m$, it is common to consider the
mapping tori~$Y(m)$ of the iterate~$f^m$ of~$f$ as well. This is the
total space of a family over a circle~$C(m)$. If~$m$ is a multiple
of~$n$, the family~$Y(m)$ is the pullback of~$Y(n)$ along the
map~$C(m)\rightarrow C(n)$ of degree~$m/n$. Thus, the reduced
invariant of~$f^m$ is the image of the invariant of~$f^n$ under the
map induced by~$C(m)\rightarrow C(n)$. In particular, taking~$n=1$,
this shows that the reduced invariant of~$f^m$ is divisible by~$m$.

%%%%%%%%%%%%%%%%%%%%%%%%%%%%%%%%%%%%%%%%%%%%%%%%%%%%%%%%%%%%%%%%%%%%%%%%%%%%%

\section{Group actions}\label{sec:diss}

If a compact Lie group~$G$ acts on~$X$ preserving a complex
spin structure~$\sigma_X$, there is an extension~$\GG$
of~$G$ by~$\TT$ such that the
homomorphism~\hbox{$G\rightarrow\Diff(X)$} lifts to a
homomorphism~\hbox{$\GG\rightarrow\Spiffc(X,\sigma_X)$}.
In~\cite{Szymik:Diss} there has been constructed an
equivariant invariant which lives in~$\pi^0_\GG(S^\lambda)$
and maps to the Bauer--Furuta invariant under the forgetful
map~\hbox{$\pi^0_\GG(S^\lambda)\rightarrow\pi^0_\TT(S^\lambda)$},
and I will now explain how this relates to the present
construction.

Given a lift as above, the universal characteristic class can be
restricted from the Thom spectrum~$\BSpiffc(X,\sigma_X)^\lambda$ to
the Thom spectrum~$\B\GG^\lambda$ to give a class
in the group~\hbox{$\pi^0_\TT(\B\GG^\lambda)$}. I do not see a direct way to
compare this with the equivariant class
in~$\pi^0_\GG(S^\lambda)$. There is, however, a universal equivariant
class in~\hbox{$\pi^0_\GG(\BSpiffc(X,\sigma_X)^\lambda)$} which maps
to both of them with the obvious maps
\begin{center}
  \mbox{ \xymatrix@C=-20pt{ &
    \pi^0_\GG(\BSpiffc(X,\sigma_X)^\lambda)\ar[dl]\ar[dr] &
    \\ \pi^0_\GG(S^\lambda) & &
    \pi^0_\TT(\BSpiffc(X,\sigma_X)^\lambda)\ar[d]\\
    {\phantom{\pi^0_\TT(\BSpiffc(X,\sigma_X)^\lambda)}} & &
    \pi^0_\TT(\B\GG^\lambda).  } }
\end{center}
No further ideas are needed for that. In this sense the present paper
conceptionally complements~\cite{Szymik:Diss}.

%%%%%%%%%%%%%%%%%%%%%%%%%%%%%%%%%%%%%%%%%%%%%%%%%%%%%%%%%%%%%%%%%%%%%%%%%%%%%

\section*{Acknowledgment}

I would like to thank Stefan Bauer for discussing various aspects of
this work with me.

%%%%%%%%%%%%%%%%%%%%%%%%%%%%%%%%%%%%%%%%%%%%%%%%%%%%%%%%%%%%%%%%%%%%%%%%%%%%%

%%%%%%%%%%%%%%%%%%%%%%%%%%%%%%%%%%%%%%%%%%%%%%%%%%%%%%%%%%%%%%%%%%%%%%%%%%%%%

\vfill

\parbox{\linewidth}{%
Markus Szymik\\
Department of Mathematical Sciences\\
NTNU Norwegian University of Science and Technology\\
7491 Trondheim\\
NORWAY\\
\href{mailto:markus.szymik@ntnu.no}{markus.szymik@ntnu.no}\\
\href{https://folk.ntnu.no/markussz}{folk.ntnu.no/markussz}}

\end{document}